\newtheorem{theorem}{Theorem}[section]
\newtheorem{lemma}[theorem]{Lemma}
\newtheorem{proposition}[theorem]{Proposition}
\newtheorem{problem}[theorem]{Problem}
\numberwithin{equation}{section}
\newcommand{\R}{\mathbb R}
\title{Graph identification index}
\author{Runze Wang}
\address[]{Department of Mathematical Sciences, University of Memphis, Memphis, TN 38152, USA}
\email{rwang6@memphis.edu; runze.w@hotmail.com}
\thanks{}
\date{\today}
\subjclass[]{}
\begin{document}

\sloppy

\begin{abstract}
    We introduce the \emph{ID-index} of a finite simple connected graph. For a graph $G=(V,\ E)$ with diameter $d$, we let $f:V\longrightarrow \mathbb{R}$ assign \emph{ranks} to the vertices, then under $f$, each vertex $v$ gets a \emph{string}, which is a $d$-vector with the $i$-th coordinate being the sum of the ranks of the vertices that are of distance $i$ from $v$. The \emph{ID-index} of $G$, denoted by $IDI(G)$, is defined to be the minimum number $k$ for which there is an $f$ with $|f(V)|=k$, such that each vertex gets a distinct string under $f$. We present some relations between ID-graphs, which were defined by Chartrand, Kono, and Zhang, and their ID-indices; give a lower bound on the ID-index of a graph; and determine the ID-indices of paths, grids, cycles, prisms, complete graphs, some complete multipartite graphs, and some caterpillars.
\end{abstract}

\maketitle

\section{Introduction}
In this paper, every graph is assumed to be a finite simple \textbf{connected} graph, so each graph we study has a finite diameter.

The topic of uniquely identifying each vertex in a graph has been extensively studied. For example, we have the \emph{metric dimension} of a graph $G=(V,\ E)$, which is the smallest size of a set $S\subseteq V$ such that every vertex $v\in V$ can be uniquely determined by the distances between $v$ and the vertices in $S$. This concept was introduced by Slater \cite{Sl} in 1975 and independently by Harary and Melter \cite{HM} in 1976, and has been studied by multiple scholars (see \cite{BDF,CEJO,HMPSW,KRR}). Another example is the \emph{partition dimension} of a graph, where vertex colorings are used (see \cite{CGH,CSZ,MS,RGL}).

Chartrand, Kono, and Zhang introduced ID-graphs in \cite{CKZ}, which gave a new idea of identifying vertices in a graph. For a graph $G$ with diameter $d$, a \emph{red-white coloring} of $G$ is an assignment of red and white to the vertices in $G$ with at least one vertex being red. Under a red-white coloring, each vertex $v$ in $G$ has a \emph{code}, which is a $d$-vector, with the $i$-th coordinate being the number of red vertices that are of distance $i$ from $v$. A red-white coloring of $G$ where each vertex gets a distinct code is said to be an \emph{ID-coloring} of $G$. If $G$ has an ID-coloring, then it is an \emph{ID-graph}, and the \emph{ID-number} of $G$, denoted by $ID(G)$, is the minimum number of red vertices we need to construct an ID-coloring.

Kono and Zhang studied ID-trees in \cite{KZ1}; gave a note on ID-caterpillars in \cite{KZ2}; and studied ID-grids and ID-prisms in \cite{KZ3}. The results in these papers were also presented in Kono's dissertation \cite{Ko}.

In this paper, we introduce the \emph{ID-index} of a graph. We let $G=(V,\ E)$ be a graph with diameter $d$, let $f:V\longrightarrow \R$ assign real numbers to the vertices in $G$, and say $f(v)$ is the \emph{rank} of $v$ for $v\in V$. Under $f$, each $v\in V$ gets a \emph{string}, which is a $d$-vector with the $i$-th coordinate being the sum of the ranks of the vertices that are of distance $i$ from $v$. The \emph{ID-index} of $G$, denoted by $IDI(G)$, is defined to be the minimum number $k$ for which there is an $f:V\longrightarrow \R$ with $|f(V)|=k$, such that each vertex $v\in V$ gets a distinct string under $f$.

For example, the ID-index of the Petersen graph is three. In Figure \ref{petersen}, we have a construction showing that three ranks are enough. Note that, in the figure, the label on each vertex is in the form "rank, (string)", so "$3,\ (2,\ 11)$" means the rank of this vertex is $3$, and the string of this vertex is $(2,\ 11)$. Later we will explain why the ID-index of the Petersen graph cannot be one or two.

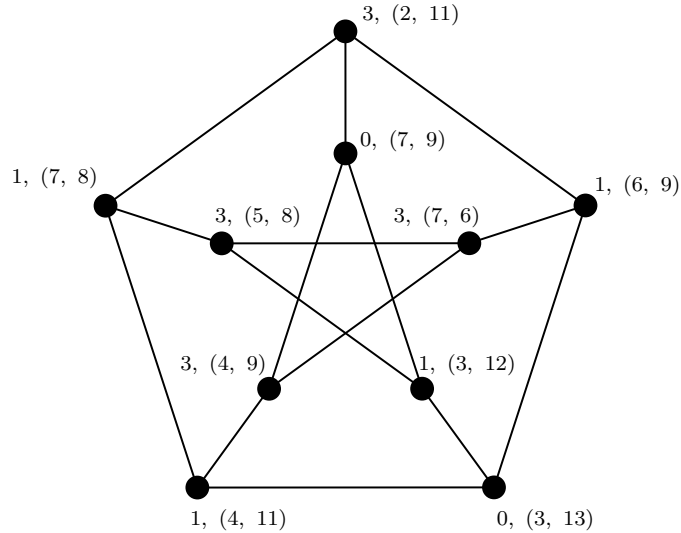
\begin{figure}[H]
    \tikzset{every picture/.style={line width=0.75pt}} %set default line width to 0.75pt        

\begin{tikzpicture}[x=0.75pt,y=0.75pt,yscale=-1,xscale=1]
%uncomment if require: \path (0,695); %set diagram left start at 0, and has height of 695

%Shape: Regular Polygon [id:dp22383311310368859] 
\draw   (396.91,140.99) -- (350.76,283.29) -- (201.16,283.37) -- (154.85,141.11) -- (275.84,53.12) -- cycle ;
%Straight Lines [id:da5597878986984608] 
\draw    (275.84,53.12) -- (275.87,114.72) ;
%Straight Lines [id:da5685104830358545] 
\draw    (154.85,141.11) -- (213.45,160.12) ;
%Straight Lines [id:da533734272418579] 
\draw    (201.16,283.37) -- (237.34,233.51) ;
%Straight Lines [id:da998154270775881] 
\draw    (350.76,283.29) -- (314.52,233.47) ;
%Straight Lines [id:da98607296277392] 
\draw    (396.91,140.99) -- (338.33,160.05) ;
%Shape: Circle [id:dp4561493951741509] 
\draw  [fill={rgb, 255:red, 0; green, 0; blue, 0 }  ,fill opacity=1 ] (270.34,53.12) .. controls (270.34,50.08) and (272.8,47.62) .. (275.84,47.62) .. controls (278.87,47.62) and (281.34,50.08) .. (281.34,53.12) .. controls (281.34,56.16) and (278.87,58.62) .. (275.84,58.62) .. controls (272.8,58.62) and (270.34,56.16) .. (270.34,53.12) -- cycle ;
%Shape: Circle [id:dp04613990349257713] 
\draw  [fill={rgb, 255:red, 0; green, 0; blue, 0 }  ,fill opacity=1 ] (270.37,114.72) .. controls (270.37,111.68) and (272.83,109.22) .. (275.87,109.22) .. controls (278.91,109.22) and (281.37,111.68) .. (281.37,114.72) .. controls (281.37,117.76) and (278.91,120.22) .. (275.87,120.22) .. controls (272.83,120.22) and (270.37,117.76) .. (270.37,114.72) -- cycle ;
%Shape: Circle [id:dp15447151492417155] 
\draw  [fill={rgb, 255:red, 0; green, 0; blue, 0 }  ,fill opacity=1 ] (149.35,141.11) .. controls (149.35,138.08) and (151.82,135.61) .. (154.85,135.61) .. controls (157.89,135.61) and (160.35,138.08) .. (160.35,141.11) .. controls (160.35,144.15) and (157.89,146.61) .. (154.85,146.61) .. controls (151.82,146.61) and (149.35,144.15) .. (149.35,141.11) -- cycle ;
%Shape: Circle [id:dp3113941419636759] 
\draw  [fill={rgb, 255:red, 0; green, 0; blue, 0 }  ,fill opacity=1 ] (207.95,160.12) .. controls (207.95,157.08) and (210.41,154.62) .. (213.45,154.62) .. controls (216.49,154.62) and (218.95,157.08) .. (218.95,160.12) .. controls (218.95,163.16) and (216.49,165.62) .. (213.45,165.62) .. controls (210.41,165.62) and (207.95,163.16) .. (207.95,160.12) -- cycle ;
%Shape: Circle [id:dp9228655716800078] 
\draw  [fill={rgb, 255:red, 0; green, 0; blue, 0 }  ,fill opacity=1 ] (195.66,283.37) .. controls (195.66,280.33) and (198.12,277.87) .. (201.16,277.87) .. controls (204.19,277.87) and (206.66,280.33) .. (206.66,283.37) .. controls (206.66,286.4) and (204.19,288.87) .. (201.16,288.87) .. controls (198.12,288.87) and (195.66,286.4) .. (195.66,283.37) -- cycle ;
%Shape: Circle [id:dp8811450313760683] 
\draw  [fill={rgb, 255:red, 0; green, 0; blue, 0 }  ,fill opacity=1 ] (231.84,233.51) .. controls (231.84,230.47) and (234.3,228.01) .. (237.34,228.01) .. controls (240.38,228.01) and (242.84,230.47) .. (242.84,233.51) .. controls (242.84,236.55) and (240.38,239.01) .. (237.34,239.01) .. controls (234.3,239.01) and (231.84,236.55) .. (231.84,233.51) -- cycle ;
%Shape: Circle [id:dp8595958733279543] 
\draw  [fill={rgb, 255:red, 0; green, 0; blue, 0 }  ,fill opacity=1 ] (345.26,283.29) .. controls (345.26,280.25) and (347.72,277.79) .. (350.76,277.79) .. controls (353.79,277.79) and (356.26,280.25) .. (356.26,283.29) .. controls (356.26,286.33) and (353.79,288.79) .. (350.76,288.79) .. controls (347.72,288.79) and (345.26,286.33) .. (345.26,283.29) -- cycle ;
%Shape: Circle [id:dp6934174951730991] 
\draw  [fill={rgb, 255:red, 0; green, 0; blue, 0 }  ,fill opacity=1 ] (309.02,233.47) .. controls (309.02,230.43) and (311.48,227.97) .. (314.52,227.97) .. controls (317.56,227.97) and (320.02,230.43) .. (320.02,233.47) .. controls (320.02,236.51) and (317.56,238.97) .. (314.52,238.97) .. controls (311.48,238.97) and (309.02,236.51) .. (309.02,233.47) -- cycle ;
%Shape: Circle [id:dp5477801420326043] 
\draw  [fill={rgb, 255:red, 0; green, 0; blue, 0 }  ,fill opacity=1 ] (391.41,140.99) .. controls (391.41,137.95) and (393.87,135.49) .. (396.91,135.49) .. controls (399.95,135.49) and (402.41,137.95) .. (402.41,140.99) .. controls (402.41,144.02) and (399.95,146.49) .. (396.91,146.49) .. controls (393.87,146.49) and (391.41,144.02) .. (391.41,140.99) -- cycle ;
%Shape: Circle [id:dp10201086460053266] 
\draw  [fill={rgb, 255:red, 0; green, 0; blue, 0 }  ,fill opacity=1 ] (332.83,160.05) .. controls (332.83,157.02) and (335.29,154.55) .. (338.33,154.55) .. controls (341.37,154.55) and (343.83,157.02) .. (343.83,160.05) .. controls (343.83,163.09) and (341.37,165.55) .. (338.33,165.55) .. controls (335.29,165.55) and (332.83,163.09) .. (332.83,160.05) -- cycle ;
%Straight Lines [id:da38711887166197223] 
\draw    (213.45,160.12) -- (338.33,160.05) ;
%Straight Lines [id:da20803041775082365] 
\draw    (338.33,160.05) -- (237.34,233.51) ;
%Straight Lines [id:da15551517866521425] 
\draw    (237.34,233.51) -- (275.87,114.72) ;
%Straight Lines [id:da1590480796299012] 
\draw    (275.87,114.72) -- (314.52,233.47) ;
%Straight Lines [id:da4282056965752852] 
\draw    (213.45,160.12) -- (314.52,233.47) ;

% Text Node
\draw (283,38) node [anchor=north west][inner sep=0.75pt]  [font=\scriptsize] [align=left] {$\displaystyle 3,\ ( 2,\ 11)$};
% Text Node
\draw (106,120) node [anchor=north west][inner sep=0.75pt]  [font=\scriptsize] [align=left] {$\displaystyle 1,\ ( 7,\ 8)$};
% Text Node
\draw (196,293) node [anchor=north west][inner sep=0.75pt]  [font=\scriptsize] [align=left] {$\displaystyle 1,\ ( 4,\ 11)$};
% Text Node
\draw (350.76,292.79) node [anchor=north west][inner sep=0.75pt]  [font=\scriptsize] [align=left] {$\displaystyle 0,\ ( 3,\ 13)$};
% Text Node
\draw (400,124) node [anchor=north west][inner sep=0.75pt]  [font=\scriptsize] [align=left] {$\displaystyle 1,\ ( 6,\ 9)$};
% Text Node
\draw (281.72,101.66) node [anchor=north west][inner sep=0.75pt]  [font=\scriptsize] [align=left] {$\displaystyle 0,\ ( 7,\ 9)$};
% Text Node
\draw (208.68,139.93) node [anchor=north west][inner sep=0.75pt]  [font=\scriptsize] [align=left] {$\displaystyle 3,\ ( 5,\ 8)$};
% Text Node
\draw (191.18,214.43) node [anchor=north west][inner sep=0.75pt]  [font=\scriptsize] [align=left] {$\displaystyle 3,\ ( 4,\ 9)$};
% Text Node
\draw (311.18,214.43) node [anchor=north west][inner sep=0.75pt]  [font=\scriptsize] [align=left] {$\displaystyle 1,\ ( 3,\ 12)$};
% Text Node
\draw (298.83,140.05) node [anchor=north west][inner sep=0.75pt]  [font=\scriptsize] [align=left] {$\displaystyle 3,\ ( 7,\ 6)$};

\end{tikzpicture}
\caption{Ranks and strings in the Petersen graph.}
\label{petersen}
\end{figure}

In fact, every graph $G=(V,\ E)$ has an ID-index. This is because we can assign $2^1,\ 2^2,\ ...,\ 2^{|V|}$ to the vertices in $G$, and this assignment always guarantees that each vertex gets a distinct string.

We have some connections between an ID-graph and its ID-index.

\begin{proposition} \label{prop1}
    If $G$ is an ID-graph, then $IDI(G)\le 2$.
\end{proposition}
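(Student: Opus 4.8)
The plan is to turn an ID-coloring of $G$ directly into a rank function that takes at most two distinct values. First, since $G$ is an ID-graph, I would fix an ID-coloring of $G$, that is, a red-white coloring under which the vertices receive pairwise distinct codes. I would then define $f\colon V\longrightarrow\R$ by setting $f(v)=1$ when $v$ is colored red and $f(v)=0$ when $v$ is colored white. In particular $f(V)\subseteq\{0,1\}$, so $|f(V)|\le 2$.

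The key observation is that, with this choice of ranks, the string of each vertex under $f$ coincides exactly with its code under the given coloring. Indeed, for a fixed vertex $v$ and a fixed distance $i$, the $i$-th coordinate of the string of $v$ is the sum of the ranks of the vertices at distance $i$ from $v$; since a vertex contributes $1$ to this sum precisely when it is red and $0$ otherwise, this sum equals the number of red vertices at distance $i$ from $v$, which is by definition the $i$-th coordinate of the code of $v$. Hence the string equals the code, coordinate by coordinate, for every vertex.

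From here the conclusion is immediate: because the coloring was an ID-coloring, the codes---and therefore the strings---are pairwise distinct. Thus $f$ is a rank function using at most two distinct ranks under which every vertex receives a distinct string, which gives $IDI(G)\le 2$.

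I do not expect a genuine obstacle here; the content is the single remark that passing from ``counting red vertices'' to ``summing ranks'' is invisible when the only ranks used are $0$ and $1$. The one point worth flagging is the degenerate case in which the ID-coloring happens to color every vertex red: then $f$ takes only the value $1$, so $|f(V)|=1$ rather than $2$. This does not affect the argument, since the bound $IDI(G)\le 2$ holds a fortiori, and it merely hints that the sharper distinction between $IDI(G)=1$ and $IDI(G)=2$ will have to be handled separately elsewhere.
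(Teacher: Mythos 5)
Your proposal is correct and follows exactly the paper's own argument: assign rank $1$ to red vertices and $0$ to white vertices, note that strings then coincide with codes, and conclude that the strings are pairwise distinct. Your extra remark about the all-red degenerate case is a fine observation but not needed for the bound.
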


\begin{proof}
    Assume $G$ is an ID-graph with an ID-coloring. We know that each vertex has a distinct code in this ID-coloring. Then we construct a rank assignment by letting the rank of each red vertex be one, and letting the rank of each white vertex be zero, then it is easy to see that the string of a vertex is the same as the code of this vertex, so each vertex has a distinct string, and thus $IDI(G)\le 2$.
\end{proof}

\begin{proposition} \label{prop2}
    For a graph $G$, the following two statements are equivalent.
    \begin{itemize}
        \item $G$ is an ID-graph, and there is an ID-coloring of $G$ where every vertex is red.
        \item $IDI(G)=1$.
    \end{itemize}
\end{proposition}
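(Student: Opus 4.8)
The plan is to show that each of the two statements is equivalent to a single combinatorial condition: \emph{every vertex of $G$ has a distinct distance-distribution vector}, by which I mean the $d$-vector whose $i$-th coordinate counts the vertices at distance $i$ from $v$. Once both bullet points are unfolded into this common condition, the equivalence is immediate. The whole proof is essentially a dictionary between the language of codes (integer counts of red vertices) and that of strings (sums of ranks), which collapse onto the same object precisely when the common rank is a fixed nonzero scalar.

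First I would handle the top-to-bottom direction. Assume $G$ is an ID-graph admitting an ID-coloring in which every vertex is red. Under this coloring the code of a vertex $v$ has $i$-th coordinate equal to the number of red vertices at distance $i$ from $v$, which, since all vertices are red, is exactly the number of vertices at distance $i$; that is, the code of $v$ is its distance-distribution vector. Because the coloring is an ID-coloring, these codes, and hence these distance-distribution vectors, are pairwise distinct. Now I would define $f(v)=1$ for every $v\in V$, so that $|f(V)|=1$. The $i$-th coordinate of the string of $v$ under $f$ is then the sum of the ranks of the vertices at distance $i$, which equals the number of such vertices, so the string of $v$ coincides with its distance-distribution vector. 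The strings are therefore pairwise distinct, giving $IDI(G)\le 1$; since $|f(V)|\ge 1$ for any assignment on a nonempty graph, this yields $IDI(G)=1$.

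For the converse I would start from $IDI(G)=1$ and recover both the ID-graph property and the all-red witness. Here $|f(V)|=1$ forces a single constant rank $c$ on all vertices, so the string of $v$ equals $c$ times its distance-distribution vector. The crux is to argue that $c\neq 0$: if $c=0$ then every string is the zero vector, which cannot be pairwise distinct once $G$ has at least two vertices, contradicting that this $f$ achieves $IDI(G)=1$. With $c\neq 0$, multiplication by $c$ is injective on vectors, so the distinctness of the strings forces distinctness of the underlying distance-distribution vectors. Coloring every vertex red then produces codes equal to exactly these vectors, which are distinct; hence the all-red coloring is an ID-coloring, so $G$ is an ID-graph and the all-red coloring is the required witness.

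The only genuine obstacle I anticipate is bookkeeping rather than conceptual: pinning down that the single common rank $c$ must be nonzero, and being careful with the degenerate single-vertex graph (where $d=0$ and strings and codes are empty vectors, so both statements hold trivially and no nonzero-scaling argument is needed). Everything else is the direct translation described above, with the key observation that a common rank of $1$, or indeed any fixed nonzero value, turns the string into the distance-distribution vector and thus into the code of the all-red coloring.
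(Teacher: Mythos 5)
Your proposal is correct and follows essentially the same route as the paper: assign the constant rank $1$ (or observe the constant rank must be nonzero in the converse) so that strings and all-red codes coincide up to a nonzero scalar. Your only additions are making explicit why the common rank cannot be zero and noting the trivial single-vertex case, both of which the paper glosses over; the substance is identical.
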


\begin{proof}
    If $G$ is an ID-graph with an ID-coloring where every vertex is red, then we can let the rank of every vertex be one, and each vertex will have a distinct string, which is the same as its code, thus $IDI(G)=1$. 
    
    For the other direction, if $IDI(G)=1$, then there is an $r\in \R$ with $r\neq 0$ such that we can let the rank of every vertex be $r$, and each vertex $v$ will get a distinct string $(c_{v1},\ c_{v2},\ ...,\ c_{vd})$, where $d$ is the diameter of $G$. Now if we let every vertex in $G$ be red, then we can see that the code of $v$ is $(\frac{c_{v1}}{r},\ \frac{c_{v2}}{r},\ ...,\ \frac{c_{vd}}{r})$, which means each vertex has a distinct code, and thus $G$ is an ID-graph with an ID-coloring where every vertex is red.
\end{proof}

The converse of Proposition \ref{prop1} is not true, because a graph $G$ with $IDI(G)=2$ is not necessarily an ID-graph. For example, the construction in Figure \ref{K112} shows that the ID-index of the complete tripartite graph $K_{1,\ 1,\ 2}$ is two (obviously it is not one). However, $K_{1,\ 1,\ 2}$ is not an ID-graph, because it is showed in \cite{CKZ} that the only ID-complete multipartite graphs are $K_{1,\ 1}$ and $K_{1,\ 2}$.

\begin{figure}[H]
    \tikzset{every picture/.style={line width=0.75pt}} %set default line width to 0.75pt        

\begin{tikzpicture}[x=0.75pt,y=0.75pt,yscale=-1,xscale=1]
%uncomment if require: \path (0,300); %set diagram left start at 0, and has height of 300

%Shape: Circle [id:dp43669998506579844] 
\draw  [fill={rgb, 255:red, 0; green, 0; blue, 0 }  ,fill opacity=1 ] (82.5,70) .. controls (82.5,66.96) and (84.96,64.5) .. (88,64.5) .. controls (91.04,64.5) and (93.5,66.96) .. (93.5,70) .. controls (93.5,73.04) and (91.04,75.5) .. (88,75.5) .. controls (84.96,75.5) and (82.5,73.04) .. (82.5,70) -- cycle ;
%Shape: Circle [id:dp6645863863086352] 
\draw  [fill={rgb, 255:red, 0; green, 0; blue, 0 }  ,fill opacity=1 ] (181.5,70) .. controls (181.5,66.96) and (183.96,64.5) .. (187,64.5) .. controls (190.04,64.5) and (192.5,66.96) .. (192.5,70) .. controls (192.5,73.04) and (190.04,75.5) .. (187,75.5) .. controls (183.96,75.5) and (181.5,73.04) .. (181.5,70) -- cycle ;
%Straight Lines [id:da26735564066052886] 
\draw    (88,70) -- (187,70) ;
%Shape: Circle [id:dp24342586821310497] 
\draw  [fill={rgb, 255:red, 0; green, 0; blue, 0 }  ,fill opacity=1 ] (110.5,136) .. controls (110.5,132.96) and (112.96,130.5) .. (116,130.5) .. controls (119.04,130.5) and (121.5,132.96) .. (121.5,136) .. controls (121.5,139.04) and (119.04,141.5) .. (116,141.5) .. controls (112.96,141.5) and (110.5,139.04) .. (110.5,136) -- cycle ;
%Shape: Circle [id:dp23368328831642038] 
\draw  [fill={rgb, 255:red, 0; green, 0; blue, 0 }  ,fill opacity=1 ] (154.5,136) .. controls (154.5,132.96) and (156.96,130.5) .. (160,130.5) .. controls (163.04,130.5) and (165.5,132.96) .. (165.5,136) .. controls (165.5,139.04) and (163.04,141.5) .. (160,141.5) .. controls (156.96,141.5) and (154.5,139.04) .. (154.5,136) -- cycle ;
%Straight Lines [id:da424493156397755] 
\draw    (88,70) -- (116,136) ;
%Straight Lines [id:da3167546715200258] 
\draw    (88,70) -- (160,136) ;
%Straight Lines [id:da7033933510719199] 
\draw    (187,70) -- (116,136) ;
%Straight Lines [id:da05642638693848068] 
\draw    (187,70) -- (160,136) ;

% Text Node
\draw (54,46) node [anchor=north west][inner sep=0.75pt]   [align=left] {$\displaystyle 1,\ ( 5,\ 0)$};
% Text Node
\draw (157,46) node [anchor=north west][inner sep=0.75pt]   [align=left] {$\displaystyle 2,\ ( 4,\ 0)$};
% Text Node
\draw (62,144) node [anchor=north west][inner sep=0.75pt]   [align=left] {$\displaystyle 1,\ ( 3,\ 2)$};
% Text Node
\draw (153,144) node [anchor=north west][inner sep=0.75pt]   [align=left] {$\displaystyle 2,\ ( 3,\ 1)$};

\end{tikzpicture}
\caption{Tripartitle graph $K_{1,\ 1,\ 2}$.}
\label{K112}
\end{figure}
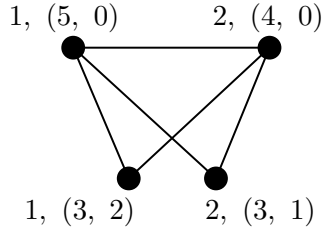

Also, this example tells us that, for a graph $G$ and its (induced) subgraph $G'$, we do not necessarily have $IDI(G')\le IDI(G)$. We can see that the triangle $K_3$ is an (induced) subgraph of $K_{1,\ 1,\ 2}$, and $IDI(K_3)=3>2=IDI(K_{1,\ 1,\ 2})$.

\section{ID-indices of different graphs}
In this section, we determine the ID-indices of some graphs, including paths, grids, cycles, prisms, complete graphs, some complete multipartite graphs, and some caterpillars.

\subsection{A lower bound}
Firstly we give a lower bound on the ID-index of a graph, using the $t$-tuplets defined in \cite{CKZ}. For a graph $G=(V,\ E)$, a set of vertices $S\subseteq V$ with $|S|=t\ge 2$ is said to be a $t$-tuplet if either
\begin{itemize}
    \item $S$ is an independent set, and any two vertices in $S$ have the same neighborhood;
\end{itemize}
or
\begin{itemize}
    \item $S$ is a clique, and any two vertices in $S$ have the same closed neighborhood, where the closed neighborhood of $v$ is just $N(v)\cup \{v\}$.
\end{itemize}

It is easy to see that, if $u$ and $v$ are two vertices in the same $t$-tuplet, then for any vertex $w$ other than $u$ and $v$, we have $d(u,\ w)=d(v,\ w)$, where $d(\cdot,\ \cdot)$ measures the distance between two vertices.

\begin{theorem} \label{lowerbound}
    Let $G=(V,\ E)$ be a graph, and let $T(G)$ be the largest $t$ such that there is a $t$-tuplet in $G$. We have
    \begin{align*}
        IDI(G)\ge T(G).
    \end{align*}
\end{theorem}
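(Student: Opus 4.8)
The plan is to show that every rank assignment $f$ yielding pairwise distinct strings must use at least $T(G)$ distinct values, from which $IDI(G)\ge T(G)$ follows at once. Fix a $t$-tuplet $S=\{u_1,\ldots,u_t\}$ with $t=T(G)$, and let $f:V\longrightarrow\R$ be any assignment under which all strings are distinct. I will argue that $f$ must be injective on $S$, so that $|f(V)|\ge|f(S)|=t$.

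The heart of the argument is a coordinate-by-coordinate comparison of the strings of two vertices $u_i,u_j\in S$ with $i\neq j$. Write $s_m(v)$ for the $m$-th coordinate of the string of $v$, i.e.\ the sum of ranks over all vertices at distance $m$ from $v$. First I would record that, because any two members of $S$ have the same neighborhood (open or closed, according to the type of tuplet), there is a single value $\delta$ equal to $d(u_i,u_j)$ for every pair $i\neq j$: namely $\delta=1$ when $S$ is a clique, and $\delta=2$ when $S$ is independent. (In the independent case I use connectivity of $G$, which guarantees each $u_i$ has a neighbor; that neighbor is shared, so two nonadjacent members of $S$ sit at distance exactly $2$.) In either case $1\le\delta\le d$, so coordinate $\delta$ genuinely appears in every string.

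Next I would invoke the observation stated just before the theorem: for any vertex $w\notin\{u_i,u_j\}$ we have $d(u_i,w)=d(u_j,w)$. Consequently each such $w$ contributes its rank $f(w)$ to the same coordinate of both strings, and these contributions cancel in the difference $s_m(u_i)-s_m(u_j)$. The only vertices whose contributions are not shared are $u_i$ and $u_j$ themselves: $u_i$ lies at distance $0$ from itself (hence contributes to no coordinate of its own string) but at distance $\delta$ from $u_j$, and symmetrically for $u_j$. Collecting terms, $s_m(u_i)=s_m(u_j)$ for every $m\neq\delta$, while $s_\delta(u_i)-s_\delta(u_j)=f(u_j)-f(u_i)$.

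Since $f$ gives distinct strings, $s(u_i)\neq s(u_j)$, and by the computation above this forces $f(u_i)\neq f(u_j)$. As $i,j$ were arbitrary, $f$ is injective on $S$, giving $|f(V)|\ge t=T(G)$; minimizing over all admissible $f$ yields $IDI(G)\ge T(G)$. The only real obstacle is the bookkeeping in the coordinate comparison—handling the two cases (clique versus independent) uniformly through the single parameter $\delta$, and confirming that nothing beyond the $u_i\leftrightarrow u_j$ swap survives in the difference—but I expect no genuine difficulty past this point.
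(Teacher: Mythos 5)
Your proposal is correct and follows essentially the same argument as the paper: both rest on the observation that all vertices outside $\{u_i,\ u_j\}$ contribute identically to the two strings, so the strings of two members of a tuplet can differ only via the swap of their own ranks at coordinate $\delta=d(u_i,\ u_j)$, forcing $f$ to be injective on the tuplet. Your version merely phrases this as the contrapositive and makes the difference $s_\delta(u_i)-s_\delta(u_j)=f(u_j)-f(u_i)$ explicit, which is a harmless refinement.
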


\begin{proof}
    Let $u$ and $v$ be two vertices in a $T(G)$-tuplet, and let $f:V\longrightarrow \R$ be a rank assignment under which we have $f(u)=f(v)=r$ for some $r\in \R$.

    We have, of course, $d(u,\ v)=d(v,\ u)=D$ for some $D$, so $u$ will contribute $r$ to the $D$-th coordinate in the string of $v$; and $v$ will also contribute $r$ to the $D$-th coordinate in the string of $u$. And for any vertex $w$ other than $u$ and $v$, as we just mentioned, we have $d(u,\ w)=d(v,\ w)$, so $w$ will contribute $f(w)$ to the same coordinate in the string of $u$ and in the string of $v$. So, under rank assignment $f$, $u$ and $v$ will get exactly the same string. Thus, in order to let each vertex have a distinct string, we need to let the vertices in a $T(G)$-tuplet get distinct ranks, which means $IDI(G)\ge T(G)$.
\end{proof}

Let us look at specific graphs.

\subsection{Some basic graphs}
In this subsection, we make usage of the results in \cite{CKZ,KZ3} to study some basic families of graphs, whose ID-indices are constants.

For paths, it is proved in \cite{CKZ} that every path $P_n$ with $n\ge 2$ is an ID-graph, so by Proposition \ref{prop1}, we have $IDI(P_n)\le 2$. And it is easy to see that every vertex being red does not give us an ID-coloring of $P_n$, so by Proposition \ref{prop2}, we know $IDI(P_n)\neq 1$. So for any path $P_n$ with $n\ge 2$, we have $IDI(P_n)=2$.

For grids, it is proved in \cite{KZ3} that the grid $P_m\square P_n$ is an ID-graph if and only if $(m,\ n)\neq (2,\ 2)$. Similar to paths, we can draw the conclusion that $IDI(P_m\square P_n)=2$ if $(m,\ n)\neq (2,\ 2)$. Also, we can make the observation that $IDI(P_2\square P_2)=IDI(C_4)=3$.

For cycles, it is proved in \cite{CKZ} that the cycle $C_n$ is an ID-graph if and only if $n\ge 6$. Similar to paths, we can reach the conclusion that $IDI(C_n)=2$ if $n\ge 6$. And we can make the observation that $IDI(C_3)=IDI(C_4)=IDI(C_5)=3$.

For prisms, it is proved in \cite{KZ3} that the prism $Y_n:=C_n\square P_2$ is an ID-graph if and only if $n\ge 6$. Similar to paths, we can get the conclusion that $IDI(Y_n)=2$ if $n\ge 6$. Then, for $Y_3$, $Y_4$, and $Y_5$, by construction, we know that the ID-index of each of them is at most three. Figure \ref{Y5} shows a feasible rank assignment and the strings under this assignment for $Y_5$, and the constructions for $Y_3$ and $Y_4$ are very similar.

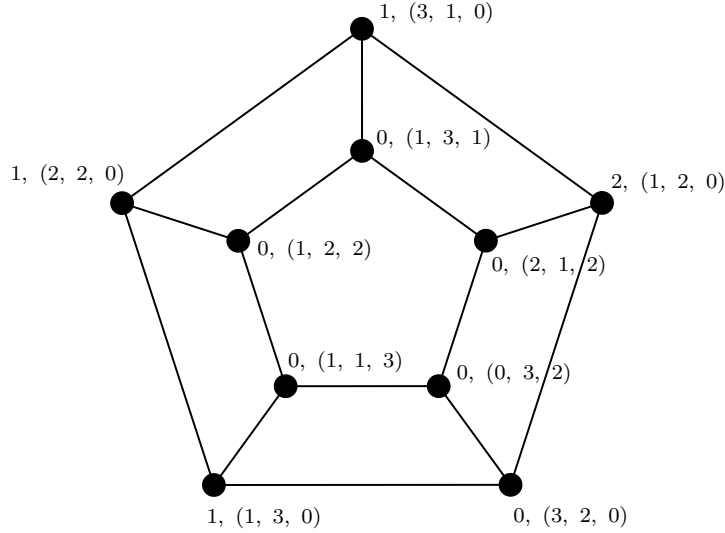
\begin{figure}[H]
    \tikzset{every picture/.style={line width=0.75pt}} %set default line width to 0.75pt        

\begin{tikzpicture}[x=0.75pt,y=0.75pt,yscale=-1,xscale=1]
%uncomment if require: \path (0,695); %set diagram left start at 0, and has height of 695

%Shape: Regular Polygon [id:dp4913484474467624] 
\draw   (453.91,361.99) -- (407.76,504.29) -- (258.16,504.37) -- (211.85,362.11) -- (332.84,274.12) -- cycle ;
%Shape: Regular Polygon [id:dp9929981069347302] 
\draw   (395.33,381.05) -- (371.52,454.47) -- (294.34,454.51) -- (270.45,381.12) -- (332.87,335.72) -- cycle ;
%Straight Lines [id:da1669605555464697] 
\draw    (332.84,274.12) -- (332.87,335.72) ;
%Straight Lines [id:da02564363711812523] 
\draw    (211.85,362.11) -- (270.45,381.12) ;
%Straight Lines [id:da18105597683998953] 
\draw    (258.16,504.37) -- (294.34,454.51) ;
%Straight Lines [id:da035106519670359715] 
\draw    (407.76,504.29) -- (371.52,454.47) ;
%Straight Lines [id:da7501717504978846] 
\draw    (453.91,361.99) -- (395.33,381.05) ;
%Shape: Circle [id:dp2862427556898717] 
\draw  [fill={rgb, 255:red, 0; green, 0; blue, 0 }  ,fill opacity=1 ] (327.34,274.12) .. controls (327.34,271.08) and (329.8,268.62) .. (332.84,268.62) .. controls (335.87,268.62) and (338.34,271.08) .. (338.34,274.12) .. controls (338.34,277.16) and (335.87,279.62) .. (332.84,279.62) .. controls (329.8,279.62) and (327.34,277.16) .. (327.34,274.12) -- cycle ;
%Shape: Circle [id:dp4677860012463788] 
\draw  [fill={rgb, 255:red, 0; green, 0; blue, 0 }  ,fill opacity=1 ] (327.37,335.72) .. controls (327.37,332.68) and (329.83,330.22) .. (332.87,330.22) .. controls (335.91,330.22) and (338.37,332.68) .. (338.37,335.72) .. controls (338.37,338.76) and (335.91,341.22) .. (332.87,341.22) .. controls (329.83,341.22) and (327.37,338.76) .. (327.37,335.72) -- cycle ;
%Shape: Circle [id:dp9141366518616381] 
\draw  [fill={rgb, 255:red, 0; green, 0; blue, 0 }  ,fill opacity=1 ] (206.35,362.11) .. controls (206.35,359.08) and (208.82,356.61) .. (211.85,356.61) .. controls (214.89,356.61) and (217.35,359.08) .. (217.35,362.11) .. controls (217.35,365.15) and (214.89,367.61) .. (211.85,367.61) .. controls (208.82,367.61) and (206.35,365.15) .. (206.35,362.11) -- cycle ;
%Shape: Circle [id:dp06922922999663661] 
\draw  [fill={rgb, 255:red, 0; green, 0; blue, 0 }  ,fill opacity=1 ] (264.95,381.12) .. controls (264.95,378.08) and (267.41,375.62) .. (270.45,375.62) .. controls (273.49,375.62) and (275.95,378.08) .. (275.95,381.12) .. controls (275.95,384.16) and (273.49,386.62) .. (270.45,386.62) .. controls (267.41,386.62) and (264.95,384.16) .. (264.95,381.12) -- cycle ;
%Shape: Circle [id:dp21037617758025529] 
\draw  [fill={rgb, 255:red, 0; green, 0; blue, 0 }  ,fill opacity=1 ] (252.66,504.37) .. controls (252.66,501.33) and (255.12,498.87) .. (258.16,498.87) .. controls (261.19,498.87) and (263.66,501.33) .. (263.66,504.37) .. controls (263.66,507.4) and (261.19,509.87) .. (258.16,509.87) .. controls (255.12,509.87) and (252.66,507.4) .. (252.66,504.37) -- cycle ;
%Shape: Circle [id:dp011259629415163452] 
\draw  [fill={rgb, 255:red, 0; green, 0; blue, 0 }  ,fill opacity=1 ] (288.84,454.51) .. controls (288.84,451.47) and (291.3,449.01) .. (294.34,449.01) .. controls (297.38,449.01) and (299.84,451.47) .. (299.84,454.51) .. controls (299.84,457.55) and (297.38,460.01) .. (294.34,460.01) .. controls (291.3,460.01) and (288.84,457.55) .. (288.84,454.51) -- cycle ;
%Shape: Circle [id:dp2879332411391491] 
\draw  [fill={rgb, 255:red, 0; green, 0; blue, 0 }  ,fill opacity=1 ] (402.26,504.29) .. controls (402.26,501.25) and (404.72,498.79) .. (407.76,498.79) .. controls (410.79,498.79) and (413.26,501.25) .. (413.26,504.29) .. controls (413.26,507.33) and (410.79,509.79) .. (407.76,509.79) .. controls (404.72,509.79) and (402.26,507.33) .. (402.26,504.29) -- cycle ;
%Shape: Circle [id:dp42367495878001793] 
\draw  [fill={rgb, 255:red, 0; green, 0; blue, 0 }  ,fill opacity=1 ] (366.02,454.47) .. controls (366.02,451.43) and (368.48,448.97) .. (371.52,448.97) .. controls (374.56,448.97) and (377.02,451.43) .. (377.02,454.47) .. controls (377.02,457.51) and (374.56,459.97) .. (371.52,459.97) .. controls (368.48,459.97) and (366.02,457.51) .. (366.02,454.47) -- cycle ;
%Shape: Circle [id:dp3726302752851882] 
\draw  [fill={rgb, 255:red, 0; green, 0; blue, 0 }  ,fill opacity=1 ] (448.41,361.99) .. controls (448.41,358.95) and (450.87,356.49) .. (453.91,356.49) .. controls (456.95,356.49) and (459.41,358.95) .. (459.41,361.99) .. controls (459.41,365.02) and (456.95,367.49) .. (453.91,367.49) .. controls (450.87,367.49) and (448.41,365.02) .. (448.41,361.99) -- cycle ;
%Shape: Circle [id:dp6722964069252402] 
\draw  [fill={rgb, 255:red, 0; green, 0; blue, 0 }  ,fill opacity=1 ] (389.83,381.05) .. controls (389.83,378.02) and (392.29,375.55) .. (395.33,375.55) .. controls (398.37,375.55) and (400.83,378.02) .. (400.83,381.05) .. controls (400.83,384.09) and (398.37,386.55) .. (395.33,386.55) .. controls (392.29,386.55) and (389.83,384.09) .. (389.83,381.05) -- cycle ;

% Text Node
\draw (340,259) node [anchor=north west][inner sep=0.75pt]  [font=\scriptsize] [align=left] {$\displaystyle 1,\ ( 3,\ 1,\ 0)$};
% Text Node
\draw (154,341) node [anchor=north west][inner sep=0.75pt]  [font=\scriptsize] [align=left] {$\displaystyle 1,\ ( 2,\ 2,\ 0)$};
% Text Node
\draw (253,514) node [anchor=north west][inner sep=0.75pt]  [font=\scriptsize] [align=left] {$\displaystyle 1,\ ( 1,\ 3,\ 0)$};
% Text Node
\draw (407.76,513.79) node [anchor=north west][inner sep=0.75pt]  [font=\scriptsize] [align=left] {$\displaystyle 0,\ ( 3,\ 2,\ 0)$};
% Text Node
\draw (457,345) node [anchor=north west][inner sep=0.75pt]  [font=\scriptsize] [align=left] {$\displaystyle 2,\ ( 1,\ 2,\ 0)$};
% Text Node
\draw (338.72,322.66) node [anchor=north west][inner sep=0.75pt]  [font=\scriptsize] [align=left] {$\displaystyle 0,\ ( 1,\ 3,\ 1)$};
% Text Node
\draw (278.68,378.93) node [anchor=north west][inner sep=0.75pt]  [font=\scriptsize] [align=left] {$\displaystyle 0,\ ( 1,\ 2,\ 2)$};
% Text Node
\draw (294.18,435.43) node [anchor=north west][inner sep=0.75pt]  [font=\scriptsize] [align=left] {$\displaystyle 0,\ ( 1,\ 1,\ 3)$};
% Text Node
\draw (379.18,441.43) node [anchor=north west][inner sep=0.75pt]  [font=\scriptsize] [align=left] {$\displaystyle 0,\ ( 0,\ 3,\ 2)$};
% Text Node
\draw (396.83,387.05) node [anchor=north west][inner sep=0.75pt]  [font=\scriptsize] [align=left] {$\displaystyle 0,\ ( 2,\ 1,\ 2)$};

\end{tikzpicture}
\caption{Ranks and strings in $Y_5$.}
\label{Y5}
\end{figure}

Actually we have $IDI(Y_3)=IDI(Y_4)=IDI(Y_5)=3$. To verify this, we just need to show that each of them is not two, because obviously they cannot be one. Of course, one may show this by a case-by-case argument, but we can also do it in a much easier way.

We mentioned that, in general, a graph $G$ with $IDI(G)=2$ is not necessarily an ID-graph. However, if $G$ has a specific property stated in the following lemma, then it is an ID-graph.

\begin{lemma} \label{lemma}
    Let $G=(V,\ E)$ be a graph with diameter $d$. If there exist $n_1,\ n_2,\ ...,\ n_d\ge 1$ with $\sum_{i=1}^d n_i=|V|-1$ such that for any vertex $v\in V$ and any $1\le i\le d$, there are $n_i$ vertices that are of distance $i$ from $v$, then for any rank assignment $f$ which gives each vertex a distinct string, the rank assignment $kf+b:v\longmapsto kf(v)+b$ with $k\neq 0$ also gives each vertex a distinct string. Furthermore, if we also have $IDI(G)=2$, then $G$ is an ID-graph.
\end{lemma}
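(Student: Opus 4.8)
The plan is to prove the two assertions in order: first the affine-invariance of the distinct-string property, by a direct computation of how a string transforms under $f\mapsto kf+b$, and then the ID-graph conclusion, by choosing the affine map so that the resulting ranks land in $\{0,1\}$.

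First I would fix a rank assignment $f$ that gives every vertex a distinct string, and write $s_i(v)=\sum_{w:\,d(v,w)=i}f(w)$ for the $i$-th coordinate of the string of $v$ under $f$. For the new assignment $g=kf+b$ with $k\ne 0$, I would compute, for each $v\in V$ and each $1\le i\le d$, that the $i$-th coordinate of the string of $v$ under $g$ is $\sum_{w:\,d(v,w)=i}\bigl(kf(w)+b\bigr)=k\,s_i(v)+b\,n_i$. The crucial point here is that the hypothesis forces the number of vertices at distance $i$ from $v$ to be exactly $n_i$, independent of $v$, so the additive term $b\,n_i$ depends only on the coordinate $i$ and not on $v$. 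Thus the string of every vertex under $g$ is obtained from its string under $f$ by one and the same coordinate-wise affine map.

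Next I would argue injectivity. If two vertices $u,v$ had equal strings under $g$, then equating coordinates, cancelling the identical $b\,n_i$ terms, and dividing by $k\ne 0$ would give $s_i(u)=s_i(v)$ for all $i$, i.e.\ equal strings under $f$, contradicting the choice of $f$. This establishes the first assertion.

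For the second assertion, assuming $IDI(G)=2$, I would take a rank assignment $f$ with $|f(V)|=2$ giving distinct strings, say with values $a\ne c$, and choose $k=\tfrac{1}{c-a}\ne 0$ and $b=\tfrac{-a}{c-a}$, so that $g=kf+b$ takes only the values $0$ and $1$. By the first part, $g$ still gives every vertex a distinct string. Coloring a vertex red when its $g$-value is $1$ and white when it is $0$ produces a red-white coloring (at least one vertex is red, since both values of $f$ are attained), and exactly as in the proof of Proposition~\ref{prop1} the code of each vertex under this coloring coincides with its string under $g$; distinctness of strings therefore gives distinctness of codes, so this is an ID-coloring and $G$ is an ID-graph. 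The computation is elementary, so I do not expect a serious obstacle; the only point requiring care is recognizing that the equal-distance-count hypothesis is precisely what makes the additive term $b\,n_i$ vertex-independent, and hence what preserves distinctness under the affine change.
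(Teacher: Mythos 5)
Your proposal is correct and follows essentially the same route as the paper: the same computation showing the $i$-th coordinate transforms to $k\,s_i(v)+b\,n_i$ with the additive term vertex-independent, and the same choice of affine map sending the two rank values to $0$ and $1$ to produce an ID-coloring. Your explicit remark that at least one vertex is red because both values of $f$ are attained is a small point the paper leaves implicit, but otherwise the arguments coincide.
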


\begin{proof}
    Assume $G$ is a graph with such $n_1,\ n_2,\ ...,\ n_d$. For vertices $u$ and $v$, assume they have strings $(c_{u1},\ c_{u2},\ ...,\ c_{ud})$ and $(c_{v1},\ c_{v2},\ ...,\ c_{vd})$ under rank assignment $f$, and we have $c_{ui}\neq c_{vi}$ for some $1\le i\le d$. Then under $kf+b$ with $k\neq 0$, the $i$-th coordinate in the string of $u$ will be $kc_{ui}+n_i b$, and the $i$-th coordinate in the string of $v$ will be $kc_{vi}+n_i b$. We have $kc_{ui}+n_i b\neq kc_{vi}+n_i b$ because $c_{ui}\neq c_{vi}$ and $k\neq 0$. So $kf+b$ also gives each vertex a distinct string.

    Now we further assume $IDI(G)=2$, so there is some $f$ which assigns $r_1\in \R$ to some vertices, and assigns $r_2$ with $r_2\neq r_1$ to others, such that each vertex has a distinct string. Then it is easy to see that there exist $k\neq 0$ and $b$ such that $kr_1+b=0$ and $kr_2+b=1$. And we know $kf+b$ is a rank assignment where each vertex has a distinct string. But now, we can let every vertex with rank zero be white, and let every vertex with rank one be red, then we will get an ID-coloring of $G$ where the code of a vertex is the same as the string of this vertex, so $G$ is an ID-graph.
\end{proof}

\begin{proposition}
    We have $IDI(Y_n)\neq 2$ for $n\in \{3,\ 4,\ 5\}$.
\end{proposition}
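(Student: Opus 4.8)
The plan is to deduce the result from the \emph{furthermore} part of Lemma \ref{lemma} by contraposition. Recall from \cite{KZ3} that $Y_n$ is an ID-graph if and only if $n\ge 6$, so none of $Y_3,\ Y_4,\ Y_5$ is an ID-graph. The second statement of Lemma \ref{lemma} asserts that whenever a graph $G$ satisfies the distance-counting hypothesis (fixed numbers $n_1,\ \ldots,\ n_d\ge 1$ of vertices at each distance, as seen from every vertex) and has $IDI(G)=2$, then $G$ must be an ID-graph. Read contrapositively: if $G$ satisfies the hypothesis but is \emph{not} an ID-graph, then $IDI(G)\neq 2$. Hence it suffices to check that each of $Y_3,\ Y_4,\ Y_5$ satisfies the hypothesis of Lemma \ref{lemma}.

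The cleanest way I would verify the hypothesis is to observe that every prism $Y_n=C_n\square P_2$ is vertex-transitive, being the Cartesian product of the vertex-transitive graphs $C_n$ and $K_2$, and Cartesian products of vertex-transitive graphs are again vertex-transitive. Vertex-transitivity forces the distance distribution to be identical when viewed from any vertex: there are fixed integers $n_1,\ \ldots,\ n_d$ so that every vertex has exactly $n_i$ vertices at distance $i$. It also forces every vertex to have eccentricity equal to the diameter $d$, so each $n_i\ge 1$; and $\sum_{i=1}^d n_i=|V|-1$ holds automatically, since every vertex other than $v$ lies at some distance $1\le i\le d$. This is precisely the hypothesis of Lemma \ref{lemma}.

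If one prefers to avoid invoking vertex-transitivity, the same check can be done by hand for these three small prisms. A direct count gives the distance distributions $(n_1,\ n_2)=(3,\ 2)$ for $Y_3$, $(n_1,\ n_2,\ n_3)=(3,\ 3,\ 1)$ for $Y_4$, and $(n_1,\ n_2,\ n_3)=(3,\ 4,\ 2)$ for $Y_5$, each independent of the chosen vertex and with every entry at least one. Either route establishes the hypothesis, and the conclusion $IDI(Y_n)\neq 2$ then follows at once from Lemma \ref{lemma}.

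I do not expect a genuine obstacle here: the substantive content of the argument is already front-loaded into Lemma \ref{lemma} and into the classification of ID-prisms in \cite{KZ3}. The only point requiring a moment of care is confirming the condition $n_i\ge 1$ for all $i\le d$ — that from each vertex \emph{every} distance up to the diameter is actually realized — which is automatic from the constant-eccentricity consequence of vertex-transitivity (or from the explicit tables above).
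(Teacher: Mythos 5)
Your proof is correct and follows essentially the same route as the paper: verify the uniform distance-distribution hypothesis of Lemma \ref{lemma} for $Y_3$, $Y_4$, $Y_5$ (your tables $(3,2)$, $(3,3,1)$, $(3,4,2)$ match the paper's exactly) and then invoke the classification of ID-prisms from \cite{KZ3} via the contrapositive of the lemma's ``furthermore'' clause. The vertex-transitivity observation is a pleasant, slightly more conceptual way to justify the hypothesis, but it does not change the substance of the argument.
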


\begin{proof}
    We can apply Lemma \ref{lemma} to $Y_3$, $Y_4$, and $Y_5$, because:
    \begin{itemize}
        \item For $Y_3$, we have: $diam(Y_3)=2$, $n_1=3$, and $n_2=2$.
        \item For $Y_4$, we have: $diam(Y_4)=3$, $n_1=3$, $n_2=3$, and $n_3=1$.
        \item For $Y_5$, we have: $diam(Y_5)=3$, $n_1=3$, $n_2=4$, and $n_3=2$.
    \end{itemize}
    
    By Lemma \ref{lemma}, we know that if $IDI(Y_n)=2$ for some $n\in \{3,\ 4,\ 5\}$, then $Y_n$ is an ID-graph. However, as we mentioned earlier, it is proved in \cite{KZ3} that $Y_n$ is an ID-graph if and only if $n\ge 6$. So $IDI(Y_n)\neq 2$ for $n\in \{3,\ 4,\ 5\}$.
\end{proof}

Note that Lemma \ref{lemma} also applies to the Petersen graph, so if the ID-index of the Petersen graph is two, then the Petersen graph is an ID-graph. However, it is showed in \cite{CKZ} that the Petersen graph is not an ID-graph, so its ID-index cannot be two. Combining this fact with the observation that its ID-index is not one, and the construction in Figure \ref{petersen}, we know that indeed the ID-index of the Petersen graph is three.

\subsection{Complete graphs and complete multipartite graphs}
In this subsection, the lower bound in Theorem \ref{lowerbound} plays a key role.

Firstly, the complete graph $K_n$ itself is an $n$-tuplet, so by Theorem \ref{lowerbound}, we know $IDI(K_n)\ge n$, and thus $IDI(K_n)=n$.

Then for the complete multipartitle graphs, we can see that the vertices in each part form a $t$-tuplet with $t$ being the part size. So, in the complete $k$-partite graph $K_{m_1,\ m_2,\ ...,\ m_k}$ with $m_1\le m_2\le ...\le m_k$, we know that $IDI(K_{m_1,\ m_2,\ ...,\ m_k})\ge m_k$.

We can determine the exact ID-indices for some of the complete multipartitle graphs.

\begin{theorem}
    For the complete $k$-partite graph $K_{m_1,\ m_2,\ ...,\ m_k}$ with $m_1<m_2<...<m_k$, we have
    \begin{align*}
        IDI(K_{m_1,\ m_2,\ ...,\ m_k})=m_k.
    \end{align*}
\end{theorem}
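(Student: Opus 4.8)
The plan is to prove the two inequalities $IDI(K_{m_1,\ \ldots,\ m_k}) \ge m_k$ and $IDI(K_{m_1,\ \ldots,\ m_k}) \le m_k$ separately. The lower bound is immediate from Theorem \ref{lowerbound}: the $m_k$ vertices forming the largest part are an independent set in which any two vertices share the same neighborhood, namely the union of all the other parts, so they constitute an $m_k$-tuplet. Hence $T(G) \ge m_k$ and $IDI(G) \ge m_k$. The real content of the theorem thus lies in producing a rank assignment that uses only $m_k$ distinct ranks and still gives every vertex a distinct string.

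To set up the upper bound I would first record the structure of strings in a complete multipartite graph. Since $k \ge 2$ and the strict inequalities force the largest part to have size $m_k \ge 2$, the diameter is exactly $2$: two vertices in the same part lie at distance $2$, while two vertices in different parts are adjacent. Writing $R = \sum_{v \in V} f(v)$ for the total rank and $P_i$ for the sum of the ranks in part $i$, a vertex $v$ in part $i$ has as neighbors exactly the vertices outside part $i$ and as non-neighbors exactly the remaining vertices of part $i$, so its string is
\begin{align*}
    \bigl(R - P_i,\ P_i - f(v)\bigr).
\end{align*}
From this formula I would read off exactly when two strings collide: two vertices in the same part $i$ automatically agree in the first coordinate, so they are separated if and only if they receive distinct ranks; two vertices in different parts $i \ne j$ collide only when $P_i = P_j$ \emph{and} their ranks coincide.

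The key observation is then that it suffices to make the part-sums $P_1,\ \ldots,\ P_k$ pairwise distinct while keeping the ranks within each part distinct. Distinct part-sums force the first coordinates $R - P_i$ to differ across parts, eliminating all cross-part collisions at once, and distinct ranks inside a part eliminate the within-part collisions. The natural construction achieving both constraints with only $m_k$ ranks is to assign to the part of size $m_i$ the ranks $\{1,\ 2,\ \ldots,\ m_i\}$. The ranks inside each part are then distinct by construction, the overall rank set used is $\{1,\ 2,\ \ldots,\ m_k\}$ of size exactly $m_k$ (realized by the largest part), and the part-sum is $P_i = 1 + 2 + \cdots + m_i = \binom{m_i + 1}{2}$. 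Since $x \mapsto \binom{x+1}{2}$ is strictly increasing and $m_1 < m_2 < \cdots < m_k$, we obtain $P_1 < P_2 < \cdots < P_k$, so the part-sums are pairwise distinct. This produces a valid rank assignment with $|f(V)| = m_k$, giving $IDI(G) \le m_k$ and completing the proof.

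I do not anticipate a serious obstacle; the only delicate point is to choose ranks so that the three requirements — distinct ranks inside each part, distinct part-sums across parts, and a total of only $m_k$ distinct ranks — hold simultaneously, and the triangular-number assignment resolves all three at once precisely because the part sizes are strictly increasing. It is worth emphasizing that the hypothesis $m_1 < \cdots < m_k$ is exactly what makes the part-sums separate automatically; if two part sizes were equal one would instead have to arrange that no rank is shared between two parts of equal sum, and avoiding that complication is the role this hypothesis plays.
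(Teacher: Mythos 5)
Your proposal is correct and follows essentially the same route as the paper: the lower bound via the $m_k$-tuplet formed by the largest part, and the upper bound via assigning ranks $1, 2, \ldots, m_i$ to the part of size $m_i$, separating cross-part vertices by the first coordinate (distinct part-sums, forced by the strictly increasing sizes) and same-part vertices by the second coordinate (distinct ranks within a part). The explicit string formula $(R - P_i,\ P_i - f(v))$ is a clean way of packaging the same computation the paper carries out coordinate by coordinate.
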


\begin{proof}
    We already know $IDI(K_{m_1,\ m_2,\ ...,\ m_k})\ge m_k$, and we will show $IDI(K_{m_1,\ m_2,\ ...,\ m_k})\le m_k$ by construction. We construct a rank assignment $f$ in the following fashion: For each $1\le i\le k$, there are $m_i$ vertices in the $i$-th part, and we let the ranks of these vertices be $1,\ 2,\ ...,\ m_i$. We denote $\sum_{i=1}^k\sum_{j=1}^{m_i}j$ by $M$, so $M$ is just the sum of the ranks of all vertices.

    Under this $f$, two vertices in different parts have different strings, because their first coordinates must vary. Assuming $u$ is in the $p$-th part, and $v$ is in the $q$-th part, with $p\neq q$, we have that the first coordinate in the string of $u$ is $M-\sum_{\ell=1}^{m_p}\ell$, the first coordinate in the string of $v$ is $M-\sum_{l=1}^{m_q}l$, and they two are different because $m_p\neq m_q$.

    Also, two vertices in the same part have different strings, because their second coordinates must vary. Assuming $u$ and $v$ are both in the $r$-th part, with $1\le f(u)\neq f(v)\le m_r$, we have that the second coordinate in the string of $u$ is $\sum_{\ell=1}^{m_r}-f(u)$, the second coordinate in the string of $v$ is $\sum_{\ell=1}^{m_r}-f(v)$, and they two are different because $f(u)\neq f(v)$.

    Under this $f$, each vertex has a distinct string, so $IDI(K_{m_1,\ m_2,\ ...,\ m_k})\le m_k$, and thus $IDI(K_{m_1,\ m_2,\ ...,\ m_k})=m_k$.
\end{proof}

Also, we know the exact ID-indices of the complete bipartite graphs.

\begin{theorem} \label{bipartite}
    For the complete bipartite graph $K_{m,\ n}$ with $m\le n$, we have
    \[IDI(K_{m,\ n})=\begin{cases}
        n &if\ m<n, \\
        n+1 &if\ m=n.
    \end{cases}\]
\end{theorem}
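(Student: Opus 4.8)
The plan is to prove the two cases separately, treating the $m<n$ case via the established framework and the $m=n$ case via a more delicate argument that isolates why symmetry forces an extra rank.

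For the case $m<n$, the lower bound $IDI(K_{m,n})\ge n$ is immediate from Theorem \ref{lowerbound}, since the larger part of size $n$ is an $n$-tuplet. For the upper bound I would mimic the construction used for the $K_{m_1,\dots,m_k}$ theorem with distinct part sizes: assign ranks $1,2,\dots,m$ to the vertices of the small part and $1,2,\dots,n$ to the vertices of the large part, and argue that (i) two vertices in different parts are separated by their first coordinate because the two part sums differ when $m\neq n$, and (ii) two vertices in the same part are separated by their second coordinate. This is essentially the $k=2$ instance of the previous theorem, so the work is already done and $IDI(K_{m,n})=n$ follows.

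The case $m=n$ is where the real content lies. Here both parts have size $n$, so the lower bound from Theorem \ref{lowerbound} only gives $IDI(K_{n,n})\ge n$, and I must improve this to $n+1$. The key obstacle is that $K_{n,n}$ has diameter $2$, so every string is just a pair $(c_1,c_2)$, and the distance-$2$ neighbors of a vertex are exactly the other $n-1$ vertices in its own part while the distance-$1$ neighbors are the $n$ vertices of the opposite part. Thus if $f$ assigns rank-multisets to the two parts with sums $S_1$ and $S_2$ and total $M=S_1+S_2$, a vertex $u$ of rank $f(u)$ in part one has string $\bigl(S_2,\ S_1-f(u)\bigr)$ and a vertex $w$ of rank $f(w)$ in part two has string $\bigl(S_1,\ S_2-f(w)\bigr)$. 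I would show that if only $n$ distinct ranks are available, a collision is forced. Within each part the second coordinates force the $n$ ranks to be all distinct, so each part must use all $n$ available rank-values exactly once; hence $S_1=S_2=:S$, and the first coordinate equals $S$ for \emph{every} vertex in both parts. Then the string of a part-one vertex of rank $a$ is $(S,\ S-a)$ and the string of a part-two vertex of rank $a$ is likewise $(S,\ S-a)$, so the two vertices of equal rank in opposite parts collide. This proves $IDI(K_{n,n})\ge n+1$.

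For the matching upper bound $IDI(K_{n,n})\le n+1$ I would exhibit a construction with $n+1$ distinct ranks that breaks the symmetry between the two parts: for instance assign ranks $1,2,\dots,n$ to one part and $2,3,\dots,n+1$ to the other (or any two size-$n$ rank-sets with distinct sums drawn from a common pool of $n+1$ values). With distinct part sums $S_1\neq S_2$ the first coordinate now separates the two parts exactly as in the $m<n$ case, while distinct ranks within each part separate same-part vertices via the second coordinate, so every vertex gets a distinct string. The main thing to verify carefully is that such a choice of two $n$-element rank-sets with unequal sums genuinely exists using only $n+1$ distinct values, which is clear since the two sets above differ in sum by exactly $1$. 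Combining the bounds gives $IDI(K_{n,n})=n+1$, completing both cases.
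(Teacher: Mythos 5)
Your proposal is correct and follows essentially the same route as the paper: the $m<n$ case is delegated to the distinct-part-sizes theorem, the lower bound for $K_{n,n}$ forces all $n$ ranks to appear in each part (making the part sums equal and producing a collision between equal-rank vertices in opposite parts), and the upper bound uses the same ranks $1,\dots,n$ versus $2,\dots,n+1$ construction. Your version merely spells out the string computation $(S_2,\ S_1-f(u))$ that the paper leaves as ``easy to see.''
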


\begin{proof}
    The case that $m<n$ is already solved in the previous theorem. Let us assume $m=n$.

    Firstly, we show that $IDI(K_{n,\ n})>n$. Assume we only have $n$ ranks $r_1,\ r_2,\ ...,\ r_n$. Because each part is an $n$-tuplet, we know that, in each part, the $n$ ranks must all appear. But now, it is easy to see that, for any $1\le i\le n$, the vertex in the first part with rank $r_i$ and the vertex in the second part with rank $r_i$ have the same string, which we do not want to happen. So $IDI(K_{n,\ n})>n$.

    Then, we show that $IDI(K_{n,\ n})\le n+1$ by construction. We construct $f$ by letting the vertices in the first part get ranks $1,\ 2,\ ...,\ n$, and letting the vertices in the second part get ranks $2,\ 3,\ ...,\ n+1$. Then, the same as what we did in the previous proof, it is easy to check that any two vertices have different strings. 
\end{proof}

Following the same idea, we may also figure something out for the complete $k$-partite graphs with $k$ being small. For general complete multipartite graphs, it is easy to see the following lower bound.

\begin{proposition}
    For $K_{n_1\times 1, n_2\times 2, ..., n_t\times t}$, the complete $(n_1+n_2+...+n_t)$-partite graph with $n_i$ parts having size $i$ for $1\le i\le t$, we have
    \begin{align*}
        {IDI(K_{n_1\times 1, n_2\times 2, ..., n_t\times t})\choose i}\ge n_i
    \end{align*}
    for any $1\le i\le t$.
\end{proposition}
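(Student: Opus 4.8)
The plan is to fix an optimal rank assignment and argue that the parts of any one fixed size are forced to use pairwise distinct sets of ranks, so that their number cannot exceed the number of available subsets of that size.

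Write $k=IDI(G)$, where $G$ denotes the graph in the statement, and fix a rank assignment $f$ that uses exactly $k$ distinct ranks (call this set $R$, so $|R|=k$) and under which all strings are distinct. First I would record two structural facts. First, each part of $G$ is a tuplet (an independent set whose members share a common neighborhood), so by the reasoning in the proof of Theorem \ref{lowerbound}, two vertices in the same part that receive equal ranks get equal strings; hence the vertices of a single part must receive pairwise distinct ranks, and a part $P$ of size $i$ determines a well-defined $i$-element subset $R_P\subseteq R$. Second, when some part has size at least $2$ the graph has diameter $2$, and the string of a vertex $v$ lying in a part $P$ is $(M-S_P,\ S_P-f(v))$, where $M=\sum_{u}f(u)$ is the total rank sum and $S_P=\sum_{u\in P}f(u)$; this is because the vertices at distance $1$ from $v$ are exactly those outside $P$, and those at distance $2$ are exactly those in $P\setminus\{v\}$.

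The heart of the argument is to show that the map $P\mapsto R_P$ is injective on the collection of parts of size $i$. I would suppose two distinct parts $A$ and $B$, both of size $i$, satisfy $R_A=R_B=:R_0$. Then $S_A=\sum_{r\in R_0}r=S_B$, so every vertex of $A\cup B$ shares the first coordinate $M-S_A$. Choosing any $r\in R_0$, there is a vertex $a\in A$ and a vertex $b\in B$ with $f(a)=f(b)=r$, and their second coordinates $S_A-r$ and $S_B-r$ coincide. Thus $a$ and $b$ carry identical strings although $a\neq b$, contradicting distinctness. Hence distinct size-$i$ parts yield distinct $i$-element subsets of $R$, and since there are $n_i$ parts of size $i$ and $\binom{k}{i}$ such subsets, we conclude $n_i\le\binom{k}{i}=\binom{IDI(G)}{i}$, which is the claim.

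The only genuine obstacle is this injectivity step, and it reduces to the explicit string formula together with the observation that two equal-sum parts sharing a common rank force a collision; everything else is bookkeeping. I would also dispose of the degenerate diameter-$1$ case, namely $t=1$ (all parts of size one, so $G$ is complete and only $i=1$ is relevant), where the single coordinate $M-f(v)$ already forces distinct vertices to have distinct ranks, matching the bound $n_1\le\binom{k}{1}$.
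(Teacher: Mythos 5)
Your proof is correct and supplies the argument the paper omits (the paper only asserts the bound is ``easy to see''): it is the natural extension of the paper's tuplet-based bound $IDI(K_{m_1,\ m_2,\ ...,\ m_k})\ge m_k$, combining the fact that each part must receive pairwise distinct ranks with the explicit diameter-two string formula $(M-S_P,\ S_P-f(v))$ to show that distinct parts of the same size must use distinct rank sets, whence $n_i\le \binom{IDI}{i}$. The injectivity step and the degenerate diameter-one case are both handled correctly.
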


But new ideas shall be needed to determine the exact ID-indices for general complete multipartite graphs.

\begin{problem}
    Determine the exact value of $IDI(K_{n_1\times 1, n_2\times 2, ..., n_t\times t})$.
\end{problem}

\subsection{Caterpillars}
The last part of this paper is devoted to finding the ID-indices of caterpillars. A \emph{caterpillar} is a tree, where we have a path, called the \emph{spine}, and some leaves attached to the spine vertices. In a caterpillar, the degree of a vertex is at least two if and only if this vertex is on the spine.

Let $G$ be a caterpillar with the spine being a path on $n$ vertices. From one end of the spine to the other, we denote the spine vertices by $s_1,\ s_2,\ ...,\ s_n$, and denote the number of leaves attached on $s_i$ by $L_i$. For example, Figure \ref{caterpillar} shows a caterpillar with $L_1=2$, $L_2=3$, $L_3=2$, $L_4=0$, and $L_5=3$.

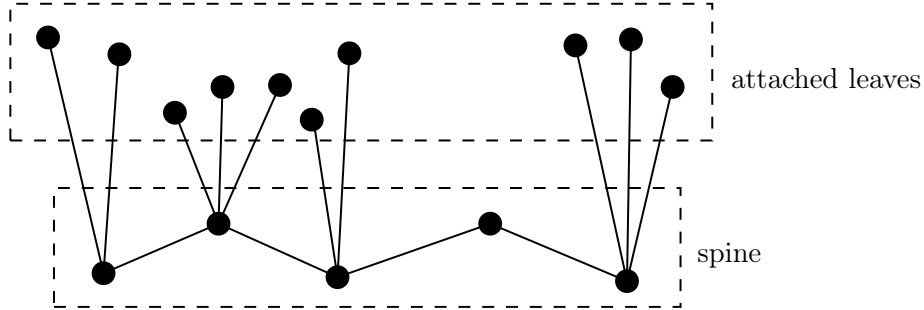
\begin{figure}[H]
    \tikzset{every picture/.style={line width=0.75pt}} %set default line width to 0.75pt        

\begin{tikzpicture}[x=0.75pt,y=0.75pt,yscale=-1,xscale=1]
%uncomment if require: \path (0,300); %set diagram left start at 0, and has height of 300

%Straight Lines [id:da9504527472725823] 
\draw    (141,165) -- (113,46) ;
%Straight Lines [id:da8122498939236953] 
\draw    (141,165) -- (149,49) ;
%Straight Lines [id:da3359385033823681] 
\draw    (177,84) -- (199,140) ;
%Straight Lines [id:da4899409945930706] 
\draw    (379,50) -- (405,169) ;
%Straight Lines [id:da26243220616260454] 
\draw    (201,71) -- (199,140) ;
%Straight Lines [id:da6186905652877666] 
\draw    (199,140) -- (230,70) ;
%Straight Lines [id:da29319758259738027] 
\draw    (259,167) -- (246,82) ;
%Straight Lines [id:da8162122246851529] 
\draw    (259,167) -- (265,54) ;
%Straight Lines [id:da7226075633614277] 
\draw    (405,169) -- (407,47) ;
%Straight Lines [id:da278654358979733] 
\draw    (405,169) -- (428,71) ;
%Shape: Circle [id:dp6765356448452455] 
\draw  [fill={rgb, 255:red, 0; green, 0; blue, 0 }  ,fill opacity=1 ] (107.5,46) .. controls (107.5,42.96) and (109.96,40.5) .. (113,40.5) .. controls (116.04,40.5) and (118.5,42.96) .. (118.5,46) .. controls (118.5,49.04) and (116.04,51.5) .. (113,51.5) .. controls (109.96,51.5) and (107.5,49.04) .. (107.5,46) -- cycle ;
%Shape: Circle [id:dp5291941674034122] 
\draw  [fill={rgb, 255:red, 0; green, 0; blue, 0 }  ,fill opacity=1 ] (143.5,54.5) .. controls (143.5,51.46) and (145.96,49) .. (149,49) .. controls (152.04,49) and (154.5,51.46) .. (154.5,54.5) .. controls (154.5,57.54) and (152.04,60) .. (149,60) .. controls (145.96,60) and (143.5,57.54) .. (143.5,54.5) -- cycle ;
%Shape: Circle [id:dp0906614351510413] 
\draw  [fill={rgb, 255:red, 0; green, 0; blue, 0 }  ,fill opacity=1 ] (171.5,84) .. controls (171.5,80.96) and (173.96,78.5) .. (177,78.5) .. controls (180.04,78.5) and (182.5,80.96) .. (182.5,84) .. controls (182.5,87.04) and (180.04,89.5) .. (177,89.5) .. controls (173.96,89.5) and (171.5,87.04) .. (171.5,84) -- cycle ;
%Shape: Circle [id:dp7582680370273929] 
\draw  [fill={rgb, 255:red, 0; green, 0; blue, 0 }  ,fill opacity=1 ] (195.5,71) .. controls (195.5,67.96) and (197.96,65.5) .. (201,65.5) .. controls (204.04,65.5) and (206.5,67.96) .. (206.5,71) .. controls (206.5,74.04) and (204.04,76.5) .. (201,76.5) .. controls (197.96,76.5) and (195.5,74.04) .. (195.5,71) -- cycle ;
%Shape: Circle [id:dp5326912838323974] 
\draw  [fill={rgb, 255:red, 0; green, 0; blue, 0 }  ,fill opacity=1 ] (224.5,70) .. controls (224.5,66.96) and (226.96,64.5) .. (230,64.5) .. controls (233.04,64.5) and (235.5,66.96) .. (235.5,70) .. controls (235.5,73.04) and (233.04,75.5) .. (230,75.5) .. controls (226.96,75.5) and (224.5,73.04) .. (224.5,70) -- cycle ;
%Shape: Circle [id:dp3096361183704279] 
\draw  [fill={rgb, 255:red, 0; green, 0; blue, 0 }  ,fill opacity=1 ] (240.5,87.5) .. controls (240.5,84.46) and (242.96,82) .. (246,82) .. controls (249.04,82) and (251.5,84.46) .. (251.5,87.5) .. controls (251.5,90.54) and (249.04,93) .. (246,93) .. controls (242.96,93) and (240.5,90.54) .. (240.5,87.5) -- cycle ;
%Shape: Circle [id:dp47401520528962005] 
\draw  [fill={rgb, 255:red, 0; green, 0; blue, 0 }  ,fill opacity=1 ] (259.5,54) .. controls (259.5,50.96) and (261.96,48.5) .. (265,48.5) .. controls (268.04,48.5) and (270.5,50.96) .. (270.5,54) .. controls (270.5,57.04) and (268.04,59.5) .. (265,59.5) .. controls (261.96,59.5) and (259.5,57.04) .. (259.5,54) -- cycle ;
%Shape: Circle [id:dp8857736306376074] 
\draw  [fill={rgb, 255:red, 0; green, 0; blue, 0 }  ,fill opacity=1 ] (373.5,50) .. controls (373.5,46.96) and (375.96,44.5) .. (379,44.5) .. controls (382.04,44.5) and (384.5,46.96) .. (384.5,50) .. controls (384.5,53.04) and (382.04,55.5) .. (379,55.5) .. controls (375.96,55.5) and (373.5,53.04) .. (373.5,50) -- cycle ;
%Shape: Circle [id:dp5247011008003324] 
\draw  [fill={rgb, 255:red, 0; green, 0; blue, 0 }  ,fill opacity=1 ] (401.5,47) .. controls (401.5,43.96) and (403.96,41.5) .. (407,41.5) .. controls (410.04,41.5) and (412.5,43.96) .. (412.5,47) .. controls (412.5,50.04) and (410.04,52.5) .. (407,52.5) .. controls (403.96,52.5) and (401.5,50.04) .. (401.5,47) -- cycle ;
%Shape: Circle [id:dp8259031437311335] 
\draw  [fill={rgb, 255:red, 0; green, 0; blue, 0 }  ,fill opacity=1 ] (422.5,71) .. controls (422.5,67.96) and (424.96,65.5) .. (428,65.5) .. controls (431.04,65.5) and (433.5,67.96) .. (433.5,71) .. controls (433.5,74.04) and (431.04,76.5) .. (428,76.5) .. controls (424.96,76.5) and (422.5,74.04) .. (422.5,71) -- cycle ;
%Shape: Circle [id:dp5516984437034504] 
\draw  [fill={rgb, 255:red, 0; green, 0; blue, 0 }  ,fill opacity=1 ] (135.5,165) .. controls (135.5,161.96) and (137.96,159.5) .. (141,159.5) .. controls (144.04,159.5) and (146.5,161.96) .. (146.5,165) .. controls (146.5,168.04) and (144.04,170.5) .. (141,170.5) .. controls (137.96,170.5) and (135.5,168.04) .. (135.5,165) -- cycle ;
%Shape: Circle [id:dp6077781210627691] 
\draw  [fill={rgb, 255:red, 0; green, 0; blue, 0 }  ,fill opacity=1 ] (193.5,140) .. controls (193.5,136.96) and (195.96,134.5) .. (199,134.5) .. controls (202.04,134.5) and (204.5,136.96) .. (204.5,140) .. controls (204.5,143.04) and (202.04,145.5) .. (199,145.5) .. controls (195.96,145.5) and (193.5,143.04) .. (193.5,140) -- cycle ;
%Shape: Circle [id:dp9569485563482785] 
\draw  [fill={rgb, 255:red, 0; green, 0; blue, 0 }  ,fill opacity=1 ] (253.5,167) .. controls (253.5,163.96) and (255.96,161.5) .. (259,161.5) .. controls (262.04,161.5) and (264.5,163.96) .. (264.5,167) .. controls (264.5,170.04) and (262.04,172.5) .. (259,172.5) .. controls (255.96,172.5) and (253.5,170.04) .. (253.5,167) -- cycle ;
%Shape: Circle [id:dp014568856377445094] 
\draw  [fill={rgb, 255:red, 0; green, 0; blue, 0 }  ,fill opacity=1 ] (330.5,140) .. controls (330.5,136.96) and (332.96,134.5) .. (336,134.5) .. controls (339.04,134.5) and (341.5,136.96) .. (341.5,140) .. controls (341.5,143.04) and (339.04,145.5) .. (336,145.5) .. controls (332.96,145.5) and (330.5,143.04) .. (330.5,140) -- cycle ;
%Shape: Circle [id:dp8953329410663711] 
\draw  [fill={rgb, 255:red, 0; green, 0; blue, 0 }  ,fill opacity=1 ] (399.5,169) .. controls (399.5,165.96) and (401.96,163.5) .. (405,163.5) .. controls (408.04,163.5) and (410.5,165.96) .. (410.5,169) .. controls (410.5,172.04) and (408.04,174.5) .. (405,174.5) .. controls (401.96,174.5) and (399.5,172.04) .. (399.5,169) -- cycle ;
%Shape: Rectangle [id:dp8700532558994856] 
\draw  [dash pattern={on 4.5pt off 4.5pt}] (94,29) -- (448,29) -- (448,98) -- (94,98) -- cycle ;
%Shape: Rectangle [id:dp9007640705713378] 
\draw  [dash pattern={on 4.5pt off 4.5pt}] (116,122) -- (432,122) -- (432,182) -- (116,182) -- cycle ;
%Straight Lines [id:da49513287314744003] 
\draw    (141,165) -- (199,140) ;
%Straight Lines [id:da09445688213308956] 
\draw    (199,140) -- (259,167) ;
%Straight Lines [id:da9555943909747249] 
\draw    (259,167) -- (336,140) ;
%Straight Lines [id:da5857078542015859] 
\draw    (336,140) -- (405,169) ;

% Text Node
\draw (439,148) node [anchor=north west][inner sep=0.75pt]   [align=left] {spine};
% Text Node
\draw (456,60) node [anchor=north west][inner sep=0.75pt]   [align=left] {attached leaves};

\end{tikzpicture}
\caption{A caterpillar.}
\label{caterpillar}
\end{figure}

Note that $L_2,\ L_3,\ ...,\ L_{n-1}$ could be zero, but $L_1$ and $L_n$ must be at least one, because if a spine vertex on the end does not have any attached leaves, then this vertex will be considered a leaf attached to the spine vertex next to it.

As we can see, the diameter of $G$ is $n+1$; and for each spine vertex $s_i$, the $L_i$ leaves attached on it form an $L_i$-tuplet. So, by Theorem \ref{lowerbound}, we know $IDI(G)\ge \max_{1\le i\le n}L_i$. On the basis of this lower bound, we can determine the exact ID-indices of symmetric caterpillars, where a caterpillar is \emph{symmetric} if for any $1\le j\le \lfloor\frac{n}{2}\rfloor$, we have $L_j=L_{n+1-j}$.

\begin{theorem}
    Let $G=(V,\ E)$ be a symmetric caterpillar with $n$ spine vertices, and denote $\max_{1\le i\le n}L_i$ by $L$. We have
    \[IDI(G)=\begin{cases}
        L &if\ L\ge 2, \\
        2 &if\ L=1.
    \end{cases}\]
\end{theorem}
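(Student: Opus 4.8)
The plan is to pair the lower bound already recorded before the statement with a construction that realizes it, and to route both cases through a single genericity mechanism.

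First, the lower bound. When $L\ge 2$, the $L$ leaves hanging on a spine vertex $s_i$ with $L_i=L$ form an $L$-tuplet, so Theorem \ref{lowerbound} gives $IDI(G)\ge L$. When $L=1$ I only need $IDI(G)\neq 1$: by Proposition \ref{prop2} this would force an ID-coloring in which every vertex is red, so that all vertices get distinct \emph{codes}; but the reflection $\sigma:s_i\mapsto s_{n+1-i}$ is an automorphism of a symmetric caterpillar, hence $v$ and $\sigma(v)$ always have identical codes, and for $n\ge 2$ there is a pair $v\neq\sigma(v)$ (while for $n=1$ the graph is $P_2$, whose two vertices already share a code). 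Thus $IDI(G)>1$, and $L=1$ will give $IDI(G)=2$ once the matching upper bound is proved.

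The upper bound carries the work, and I would arrange both cases to collapse to one colouring problem. Note that summing the coordinates of the string of $v$ yields $\sum_w f(w)-f(v)$, a fixed total minus the rank of $v$; consequently two vertices with equal strings must have equal rank. Now fix a \emph{colouring} $c:V\to\{1,\dots,k\}$ (with $k=L$ if $L\ge2$ and $k=2$ if $L=1$) and attach to each vertex its \emph{colour--distance profile}, the table whose $(i,j)$ entry counts the vertices of colour $j$ at distance $i$. Each string coordinate is a fixed nonnegative-integer combination, dictated by this profile, of the eventual rank values $x_1,\dots,x_k$; hence two vertices share a string for \emph{every} choice of the $x_j$ exactly when their profiles coincide, and otherwise their strings agree only on a finite union of hyperplanes in the $x$-space. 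Since two vertices of different colours already have different coordinate-sums (hence different strings) whenever the $x_j$ are distinct, it suffices to produce a colouring in which any two vertices of the \emph{same} colour have distinct profiles, and then to pick the $x_j$ generically and distinct. The tuplet constraint, that the leaves on a common spine vertex receive distinct colours, is precisely what prevents two such leaves from sharing a profile.

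Concretely I would give the $L_i$ leaves on $s_i$ the colours $1,\dots,L_i$ (legitimate as $L_i\le L$, and vacuous for $L=1$) and colour the spine asymmetrically, say colour $1$ on $s_i$ for $i\le\lceil n/2\rceil$ and colour $2$ on the rest; all $L$ colours are then used, so this witnesses $IDI(G)\le L$ (resp.\ $\le 2$). The verification splits into three checks: a leaf and a spine vertex are separated because their distance-$1$ counts are $1$ and $\deg\ge 2$; two equally-coloured vertices lying in the same half of the spine are separated by the \emph{structural} fact that, in a caterpillar, two vertices not belonging to a common leaf-tuplet share a count--distance profile only when they are interchanged by $\sigma$ (and same-colour pairs never lie in a common tuplet); and an equally-coloured pair interchanged by $\sigma$ must consist of two leaves whose spine-neighbours lie in opposite halves, so their distance-$1$ colour counts disagree. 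The main obstacle I anticipate is exactly this structural separation claim. I would prove it by reading the profile of a vertex as an encoding of the windowed leaf-sequence $(L_j)$ around its spine position together with its distances to the two ends: the eccentricity pins down the multiset of end-distances, and peeling the profile inward from the largest distance forces the surrounding leaf-counts to match in one of the two orientations, which is reflection-equivalence. With that lemma in hand, the three checks deliver a colouring with the required property, generic ranks complete $IDI(G)\le L$ (resp.\ $\le 2$), and combining with the lower bound gives the stated values.
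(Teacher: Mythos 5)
Your proposal is correct in substance but follows a genuinely different route from the paper's. The paper breaks symmetry with a single distinguished rank: it sets $f(s_1)=2$, $f(s_i)=1$ for $i\ge 2$, and gives the leaves on $s_i$ the ranks $1,\dots,L_i$, then verifies string-distinctness directly by a case analysis (counting nonzero coordinates to separate positions with different eccentricities, and using the distance to $s_1$ to separate a vertex from its mirror image); the $n=1$ case is delegated to Theorem \ref{bipartite}. You instead separate the combinatorics from the arithmetic: the observation that the coordinates of a string sum to $\sum_w f(w)-f(v)$, so equal strings force equal ranks, disposes of all cross-colour pairs at once, and then a genericity argument reduces the whole problem to exhibiting a colouring in which same-coloured vertices have distinct colour--distance profiles. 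Your symmetry-breaking device is also different (colouring the two halves of the spine with colours $1$ and $2$ rather than distinguishing one endpoint). What your approach buys is reusability --- the coordinate-sum trick and the ``distinct profiles plus generic ranks'' reduction apply verbatim to other upper-bound constructions in this paper --- at the price of non-explicit real ranks where the paper gets small integers. Two small repairs are needed: (i) for $n=1$ and $L=1$ the graph is $P_2$ and your colouring gives both vertices colour $1$ with identical profiles, so you must treat that case separately (as you already do for the lower bound, and as $IDI(P_n)=2$ is known); (ii) the ``structural separation claim'' you flag as the main obstacle is stronger than what you need and the peeling argument can be dropped: the eccentricity computation $ecc(s_p)=\max(p,\,n+1-p)$ and $ecc(\text{leaf on }s_p)=1+\max(p,\,n+1-p)$, together with the degree comparison at distance one, already reduces every same-colour pair to a mirror pair, which your half-split colouring then separates.
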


\begin{proof}
    If $n=1$, then the caterpillar is actually a star, which is just $K_{1,\ L}$. So we can get our desired conclusion using Theorem \ref{bipartite}. Now, let us assume there are at least two vertices on the spine, so $n\ge 2$.
    
    \textbf{Case 1.} $L\ge 2$.
    
    We already have $IDI(G)\ge L$, so we only need to check that $IDI(G)\le L$. We construct a rank assignment $f:V\longrightarrow \R$ in the following fashion: For the spine vertices, we let $f(s_1)=2$, and let $f(s_i)=1$ for $2\le i\le n$; for the $L_i$ leaves attached on $s_i$, we let their ranks be $1,\ 2,\ ...,\ L_i$. An example is given in Figure \ref{caterpillar2}, where the leftmost spine vertex is $s_1$, and the rightmost spine vertex is $s_6$.

    \begin{figure}[H]
        \input caterpillar2.tex
    \end{figure}
    
    The idea of this construction is making almost everything symmetric, but using the only spine vertex with rank two to "break the balance". We need to check that, under our $f$, each vertex has a distinct string.

    Firstly, two spine vertices have different strings. 
    \begin{itemize}
        \item If $p\neq q$ and $p\neq n+1-q$, then $s_p$ and $s_q$ have different strings. In the string of $s_x$, the first $\max\{x,\ n+1-x\}$ coordinates are nonzero, and the rest are zero. So if $p\neq q$ and $p\neq n+1-q$, then $s_p$ and $s_q$ have different numbers of nonzero coordinates in their strings.
        \item For any $1\le k\le \lfloor\frac{n}{2}\rfloor$, the string of $s_k$ and the string of $s_{n+1-k}$ are different. This is obvious because $s_k$ and $s_{n+1-k}$ have different distances from $s_1$, the only spine vertex with rank two. For example, in Figure \ref{caterpillar2}, the rank of $s_1$ is counted in the second coordinate in the string of $s_3$; but it is counted in the third coordinate in the string of $s_4$. Actually, the string of $s_3$ is $(5,\ 16,\ 14,\ 3,\ 0,\ 0,\ 0)$, and the string of $s_4$ is $(5,\ 15,\ 15,\ 3,\ 0,\ 0,\ 0)$.
    \end{itemize}

    Secondly, two attached leaves have different strings. For two leaves $u$ and $v$:
    \begin{itemize}
        \item If they are attached on the same spine vertex, then the second coordinates in their strings are different, because $f(u)\neq f(v)$, and $f(u)$ contributes to the second coordinate in the string of $v$, and vice versa.
        \item If $u$ is attached on $s_p$, $v$ is attached on $s_q$, where $p\neq q$ and $p\neq n+1-q$, then they have different numbers of nonzero coordinates in their strings, just like what we have for the spine vertices.
        \item If $u$ is attached on $s_k$, $v$ is attached on $s_{n+1-k}$, for some $1\le k\le \lfloor\frac{n}{2}\rfloor$, then we know their strings are different because $u$ and $v$ have different distances from $s_1$, the only spine vertex with rank two.
    \end{itemize}

    Also, a spine vertex and an attached leaf have different strings. 
    \begin{itemize}
        \item For a leaf $u$ attached on $s_j$ with $2\le j\le n$, the first coordinate in the string of $u$ is one, because $s_j$ is the only neighbor of $u$. For any spine vertex $s_i$, the first coordinate in the string of $s_i$ is at least two, because $s_i$ has at least two neighbors. So the string of $u$ must be different from the string of any $s_i$, as their first coordinates vary.
        \item If a leaf $v$ is attached on $s_1$, then we can see that every coordinate in the string of $v$ is nonzero. We can also see that the $(n+1)$-th coordinate in the string of $s_i$ is zero, for any $1\le i\le n$. So the string of $v$ must be different from the string of any $s_i$, as they have different numbers of nonzero coordinates.
    \end{itemize}

    \textbf{Case 2.} $L=1$.

    In this case, we have $IDI(G)\le 2$, because we can construct a rank assignment $f$ by letting $f(s_1)=2$ and $f(v)=1$ for $v\in V$ and $v\neq s_1$. It is easy to check that this assignment does the same trick as what we have in Case 1.

    Also, we have $IDI(G)\neq 1$, because if every vertex gets the same rank, then it is clear that the string of $s_1$ is the same as the string of $s_n$, as the caterpillar is symmetric. Thus, we have $IDI(G)=2$ if $L=1$.
\end{proof}

Determining the ID-indices of non-symmetric caterpillars might be taken as a future goal.

\begin{problem}
    Assuming $G$ is a non-symmetric caterpillar with $n$ spine vertices, which means we have $L_j\neq L_{n+1-j}$ for some $1\le j\le \lfloor\frac{n}{2}\rfloor$, find $IDI(G)$.
\end{problem}

\end{document}